\DeclareMathOperator{\son}{SO}
\DeclareMathOperator{\pst}{1^{*}}
\DeclareRobustCommand{\rchi}{{\mathpalette\irchi\relax}}
\newcommand{\irchi}[2]{\raisebox{\depth}{$#1\chi$}}
\tikzset{node distance=2cm, auto}
\theoremstyle{definition}
\newtheorem{definition}{Definition}
\theoremstyle{plain}
\newtheorem{theorem}{Theorem}
\newtheorem{lemma}{Lemma}
\newtheorem{proposition}[lemma]{Proposition}
\theoremstyle{remark}
\newtheorem{remark}{Remark}
\newcommand{\rB}[1]{\ensuremath{\left(#1\right)}}
\newcommand{\sB}[1]{\ensuremath{\left[#1\right]}}
\newcommand{\cB}[1]{\ensuremath{\left\{#1\right\}}}
\newcommand{\scal}[2]{\ensuremath{\left\langle#1,#2\right\rangle}}
\newcommand{\norm}[1]{\left|\left|#1\right|\right|}
\newcommand{\modulus}[1]{\left|#1\right|}
\renewcommand\epsilon{\varepsilon}
\renewcommand\rho{\varrho}
\newcommand{\de}{\ensuremath{\mathrm{d}}}
\DeclareMathOperator{\Curl}{Curl}
\DeclareMathOperator{\spt}{spt}
\DeclareMathOperator{\dist}{dist}
\renewcommand{\tilde}{\widetilde}
\newcommand{\SOn}{SO(n)}
\newcommand{\zak}{%
  \mathbin{\vrule height 1.6ex depth 0pt width
0.13ex\vrule height 0.13ex depth 0pt width 1.3ex}
}
\tikzset{node distance=2cm, auto}
\renewcommand{\phi}{\varphi}
\renewcommand{\phi}{\varphi}
\begin{document}
\title[Geometric Rigidity for Incompatible Fields]{Geometric Rigidity Estimates for Incompatible Fields in dimension $\ge 3$}
\author{Gianluca Lauteri}
\address{Max-Planck-Institut f\"ur Mathematik in den Naturwissenschaften, Leipzig, Germany}
\email[G.~Lauteri]{Gianluca.Lauteri@mis.mpg.de}
\author{Stephan Luckhaus}
\address{Institut f\"ur Mathematik, Leipzig University, D-04009 Leipzig, Germany}
\email[S.~Luckhaus]{\tt Stephan.Luckhaus@math.uni-leipzig.de}
\maketitle

\begin{abstract}
  We prove geometric rigidity inequalities for incompatible fields in dimension higher than $2$. We are able to obtain strong scaling-invariant $L^p$ estimates in the supercritical regime $p > 1^* = \frac{n}{n-1}$, while for critical exponent $1^*$ we have a scaling invariant inequality only for the weak $L^{1^*}$ norm. Although not optimal, such an estimate in $L^{1^*, \infty}$ is enough in order to infer a useful lemma which gives $BV$ bounds for $\son(n)$-valued fields with bounded $\Curl$.
\end{abstract}

\section{Introduction}

The geometric rigidity estimate for gradient fields proved in~\cite{FJM} plays a crucial role in nonlinear elasticity theory. However, in the study of lattices with dislocations, a geometric rigidity estimate for incompatible fields (i.e., fields not arising from gradients) becomes necessary (cf. e.g.~\cite{MSZ} and~\cite{LL}). In~\cite{MSZ}, the authors proved a (\emph{scaling invariant}) version of the geometric rigidity theorem in~\cite{FJM} for incompatible fields in dimension $2$ for the critical exponent.\\
In this work we give a proof of the analogous result in dimension $\ge 3$, for the supercritical regime $p > 1^* = \frac{n}{n-1}$ (Theorem~\ref{thm:rig_LL_1}). The approach is to write down an incompatible field as the sum of a compatible term, for which we can use the classical geometric rigidity from~\cite{FJM} and a remainder, which is the $L^p$ norm of a weakly singular operator (the \emph{averaged linear homotopy operator}), whose derivative is a Calder\'on-Zygmund operator. This allows to give the bounds in the supercritical case. On the other hand, for the critical exponent we can still use the weak geometric rigidity estimate proved in~\cite{CDM} in order to find a scaling invariant estimate for the weak-$L^{1^*}$ norm (Theorem~\ref{thm:useless1}). From Theorem~\ref{thm:useless1}, we deduce directly in Proposition~\ref{prop:curl_bounds_D_SOn} that the $\Curl$ of a matrix field $A \in L^{1^*, \infty}(\Omega)^{n\times n}$ (where $\Omega$ is an open bounded set in $\mathbb{R}^n$) taking values in $SO(n)$ bounds its gradient.

\section{Notations and Preliminaries}
In what follows, $C$ will denote a (universal) constant whose value is allowed to change from line to line. We put $\widehat{x}:=\frac{x}{\modulus{x}}$, while $L^p(U, \Lambda^r)$ ($W^{m, p}(U, \Lambda^r)$) denotes the space of $r$-forms on $U$ whose coefficients are $L^p$ ($W^{m,p }$) functions. Moreover, recall that we can identify a tensor field $A\in L^1(\Omega)^{n\times n}$ with a vector of $1$-forms of length $n$, that is with $\omega:=\rB{\omega^i}_{i=1}^n$, $\omega^i = A^i_j \de x^j$, and its $\Curl$ with $\de \omega$ (or, more precisely, with $\rB{\star \de \omega}^{\flat}$), given by \[\displaystyle \de \omega^i = \sum_{j < k} \rB{\frac{\partial A^i_j}{\partial x^k} - \frac{\partial A^i_k}{\partial x^j}} \de x^j \wedge \de x^k .\]
We recall that a  real-valued function $f$ from a measure space $(X, \mu)$ is in $L^{p, \infty}(X, \mu)$ or ($L^p_w(X, \mu)$) if
	\[
	 \norm{f}_{L^{p, \infty}(X, \mu)}:=\sup_{t > 0} t \mu\rB{\cB{x \in X\biggr| \modulus{f(x)} > t}}^{\frac{1}{p}} < \infty.
	\]
	Is easy to check that $\norm{\cdot}_{L^{p, \infty}}(X, \mu)$ is only a quasi-norm, that is the triangle inequality holds just in the weak form
	\[
	 \norm{f+g}_{L^{p, \infty}(X, \mu)} \le C_{p} \rB{\norm{f}_{L^{p, \infty}(X, \mu)} + \norm{g}_{L^{p, \infty}(X, \mu)}}.
	\]
	We write $L^{p, \infty}(\Omega)$ for $L^{p, \infty}(\Omega, \modulus{\cdot})$, when $\Omega\subset \mathbb{R}^n$ and $\modulus{\cdot}$ is the Lebesgue measure.\\
We recall the
	\begin{definition}
		Let $U \subset \mathbb{R}^n$ be a star-shaped domain with respect to the point $y \in U$. The \emph{linear homotopy operator} at the point $y$ is the operator
		\[
			k_y= k_{y, r} : \Omega^r(U) \to \Omega^{r-1}(U),
		\]
		defined as
		\[
			(k_y \omega) (x):=\int_0^1{s^{r-1} \omega(sx + (1-s)y)\zak (x-y)\de s},
		\]
	\end{definition}
	where $(\omega(x)\zak v)\sB{v_1, \cdots v_{n-1}}:=\omega(x)\sB{v, v_1, \cdots, v_{n-1}}$. It is well known that the linear homotopy operator satisfies
	\begin{equation}
	 \label{eq:lho1}
	 \omega = k_{y, r+1} \de \omega + \de k_{y, r} \omega\quad \forall \omega \in \Omega^r(U).
	\end{equation}
	In order to get more regularity, we consider the following \emph{averaged} linear homotopy operator on $B := B(0, 1)$, which coincides with the one introduced by Iwaniec and Lutoborski in~\cite{IL}, except for the choice of the weight function:
	\[
	 T = T_r : \Omega^r(B) \to \Omega^{r-1}(B),
	\]
	\[
	 T\omega(x):=\int_B \phi(y) \rB{k_y\omega}(x) \de y,
	\]
	where $\phi \in \mathcal{C}^{\infty}_c(B(0, 2))$ is a positive cut-off function, with $\phi \equiv 1$ in $B$ and 
	\[\max\cB{\norm{\phi}_{L^{\infty}(\mathbb{R}^n)}, \norm{\nabla \phi}_{L^{\infty}(\mathbb{R}^n)}}\le 3.\]
	Clearly,~\eqref{eq:lho1} holds for $T$ as well:
	\begin{equation}
	 \label{eq:lho2}
	 \omega = T\de \omega + \de T \omega.
	\end{equation}
An essential result is the rigidity estimate due to Friesecke, James and M\"uller: 
\begin{theorem}[~\cite{FJM}]
 \label{thm:FJM}
 Let $\Omega\subset\mathbb{R}^n$ be a bounded Lipschitz domain, $n \ge 2$, and let $1<p<\infty$. There exists a constant $C = C(p, \Omega)$ such that for every $u \in W^{1, 2}(\Omega)$ there exists a rotation $R \in SO(n)$ such that
 \[
  \norm{\nabla u - R}_{L^p(\Omega)^{n\times n}} \le C\norm{\dist(\nabla u, SO(n))}_{L^p(\Omega)^{n\times n}}.
 \]
\end{theorem}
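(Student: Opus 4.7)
This is the rigidity theorem of Friesecke, James and M\"uller, and the plan is to follow the strategy of~\cite{FJM}: first establish the estimate on the unit ball $B = B(0,1)$, and then pass from $B$ to a general bounded Lipschitz domain $\Omega$ by a finite bi-Lipschitz covering. On each piece of the cover, the local estimate produces a rotation $R_\alpha \in \son(n)$, and overlapping pieces are reconciled via
\[
\modulus{R_\alpha - R_\beta}^p \, \modulus{U_\alpha \cap U_\beta} \le C \rB{\norm{\nabla u - R_\alpha}_{L^p(U_\alpha)}^p + \norm{\nabla u - R_\beta}_{L^p(U_\beta)}^p},
\]
so that chaining over the (finite) cover yields a single distinguished $R$ satisfying the stated inequality on $\Omega$.

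The core of the argument is the local case on $B$, which I attack by contradiction. Assume there exists a sequence $u_k \in W^{1,2}(B)$ with $\norm{\dist(\nabla u_k, \son(n))}_{L^p(B)} = \varepsilon_k$ and $\inf_{R \in \son(n)} \norm{\nabla u_k - R}_{L^p(B)} \ge k\, \varepsilon_k$; after subtracting affine maps and normalizing, one may assume $\varepsilon_k \to 0$. The key technical device is a Lipschitz truncation: replace $u_k$ by $\tilde{u}_k$ coinciding with $u_k$ outside a small set (where the Hardy--Littlewood maximal function of $\modulus{\nabla u_k}$ is large) and satisfying $\norm{\nabla \tilde{u}_k}_{L^\infty} \le C$. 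A weak-$*$ limit $u_\infty$ exists with gradient valued in $\son(n)$ a.e., so Reshetnyak's rigidity theorem forces $\nabla u_\infty = R_\infty$ to be a constant rotation. Linearizing $u_k = R_\infty x + \varepsilon_k v_k$ and Taylor expanding $\dist(\cdot, \son(n))$ to first order near $R_\infty$, one obtains on the good set a uniform $L^p$ bound for the symmetric part of $R_\infty^T \nabla v_k$; Korn's inequality then provides a uniform $L^p$ bound for $\nabla v_k$ modulo skew-symmetric constants, which contradicts the assumption $\inf_{R} \norm{\nabla u_k - R}_{L^p} \ge k\, \varepsilon_k$ translated into its infinitesimal counterpart for $v_k$.

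The main obstacle is the coupling between the Lipschitz truncation and the nonlinear Reshetnyak rigidity. The truncation is required to restore strong compactness so that the pointwise constraint $\nabla u_\infty \in \son(n)$ survives in the limit; it necessarily introduces an error on the bad set which must be absorbed into the right-hand side of the inequality. Verifying this absorption quantitatively, and showing that the subsequent linearization around $R_\infty$ preserves the scaling, is the subtle point; it depends in an essential way on the smooth manifold structure of $\son(n)$ near any point together with the measure estimate $\modulus{\cB{u_k \ne \tilde u_k}} \le C \varepsilon_k^p / \lambda^p$ for the truncation threshold $\lambda$.
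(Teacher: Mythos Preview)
The paper does not prove Theorem~\ref{thm:FJM}; it is quoted from~\cite{FJM} as a known preliminary and invoked as a black box in the proofs of Theorems~\ref{thm:useless1} and~\ref{thm:rig_LL_1}. There is therefore no proof in the paper to compare against.

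As an independent argument your outline has a genuine gap at the linearization step. After truncation, compactness and Reshetnyak you obtain a rotation $R_\infty$ and (at best, via weak continuity of the determinant plus convergence of norms) strong convergence $\nabla\tilde u_k\to R_\infty$, but with no rate. Writing $u_k=R_\infty x+\varepsilon_k v_k$ and Taylor-expanding $\dist(\,\cdot\,,\son(n))$ at $R_\infty$ to control $\mathrm{sym}(R_\infty^T\nabla v_k)$ is only legitimate where $\nabla u_k$ is \emph{pointwise} close to $R_\infty$; a priori it is merely close to \emph{some} rotation $R_k(x)$, so $\varepsilon_k\nabla v_k$ need not be small on the good set, the first-order remainder is uncontrolled, and Korn's inequality cannot be applied on an arbitrary measurable subset where closeness happens to hold. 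You correctly flag in your last paragraph that ``the subsequent linearization around $R_\infty$ preserves the scaling'' is the subtle point, but you do not supply a mechanism that closes it. The device that actually does this in~\cite{FJM} is different and fully quantitative: the Piola identity $\ddiv(\mathrm{cof}\,\nabla u)=0$ together with $\mathrm{cof}\,R=R$ on $\son(n)$ gives $\Delta\tilde u=\ddiv G$ with $\modulus{G}\le C\,\dist(\nabla u,\son(n))$ after truncation; comparison with the harmonic function sharing boundary data, followed by interior derivative estimates for the harmonic part, makes the linearization and Korn step rigorous on interior balls without any compactness, and a covering then globalizes. Your patching from $B$ to a general Lipschitz $\Omega$ via overlapping bi-Lipschitz charts is fine.
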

For weak-$L^p$ estimate, we shall need the following theorem proved by Conti, Dolzmann and M\"uller:
	\begin{theorem}[~\cite{CDM}]
	 \label{thm:cdm}
	 Let $p \in (1, \infty)$ and $\Omega\subset\mathbb{R}^n$ be a bounded connected domain. There exists a constant $C > 0$ depending only on $p, n$ and $\Omega$ such that for every $u \in W^{1, 1}(\Omega)^n$ such that $\dist(\nabla u, SO(n)) \in L^{p, \infty}(\Omega)^{n\times n}$ there exists a rotation $R \in SO(n)$ such that
	 \begin{equation}
	  \label{eq:cdm}
	  \norm{\nabla u - R}_{L^{p, \infty}(\Omega)^{n \times n}} \le C \norm{\dist(\nabla u, SO(n))}_{L^{p, \infty}(\Omega)^{n\times n}}.	 
	 \end{equation}
	\end{theorem}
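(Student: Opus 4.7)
The plan is to bootstrap from the strong estimate of Theorem~\ref{thm:FJM} via a Lipschitz truncation of $u$, combined with a sharp tail computation for $f:=\dist(\nabla u,\SOn)$ in the Lorentz scale.

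\emph{Step 1 (truncation).} For $\lambda$ above a fixed threshold, set $E_\lambda:=\{M|\nabla u|>\lambda\}$, where $M$ is the Hardy--Littlewood maximal operator, and let $u_\lambda\in W^{1,\infty}(\Omega)^n$ be a standard Acerbi--Fusco Lipschitz extension of $u|_{\Omega\setminus E_\lambda}$, so that $u_\lambda=u$ on $E_\lambda^c$ and $\norm{\nabla u_\lambda}_{L^\infty}\le C\lambda$. Since $|\nabla u|\le\sqrt{n}+f$, the weak-$(p,p)$ continuity of $M$ yields
\[|E_\lambda|\le C\lambda^{-p}\norm{f}_{L^{p,\infty}(\Omega)}^p;\]
on $E_\lambda^c$ the function $f_\lambda:=\dist(\nabla u_\lambda,\SOn)$ coincides with $f$, and $f_\lambda\le C\lambda$ a.e.\ in $\Omega$.

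\emph{Step 2 (strong rigidity at $q>p$).} Fix $q\in(p,\infty)$ and apply Theorem~\ref{thm:FJM} to $u_\lambda$ to obtain $R_\lambda\in\SOn$ with $\norm{\nabla u_\lambda-R_\lambda}_{L^q(\Omega)}^q\le C\int_\Omega f_\lambda^q$. The layer-cake identity
\[\int_{\{f\le\lambda\}}f^q=q\int_0^\lambda r^{q-1}|\{r<f\le\lambda\}|\,\de r\le\tfrac{q}{q-p}\lambda^{q-p}\norm{f}_{L^{p,\infty}}^p,\]
together with the pointwise bound $f_\lambda\le C\lambda$ on $E_\lambda$ and the measure estimate of Step~1, yields
\[\norm{\nabla u_\lambda-R_\lambda}_{L^q(\Omega)}^q\le C_q\,\lambda^{q-p}\norm{f}_{L^{p,\infty}}^p.\]

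\emph{Step 3 (selecting $R$ and distribution-function bound).} By comparing $R_\lambda$ and $R_\mu$ on the common good set $E_\lambda^c\cap E_\mu^c$ and invoking Step~2, the family $\{R_\lambda\}$ is Cauchy as $\lambda\to\infty$ and converges to some $R\in\SOn$. For $t>0$ set $\lambda=ct$ with $c$ small and decompose
\[\{|\nabla u-R|>t\}\subset E_{ct}\cup\{|\nabla u_{ct}-R_{ct}|>t/4\}\cup\{|R_{ct}-R|>t/4\}.\]
The first set is controlled by $Ct^{-p}\norm{f}_{L^{p,\infty}}^p$ via Step~1; the second, by Chebyshev at exponent $q$ and Step~2, by $Ct^{-q}\cdot t^{q-p}\norm{f}_{L^{p,\infty}}^p=Ct^{-p}\norm{f}_{L^{p,\infty}}^p$; the third is empty once $t$ exceeds a fixed multiple of $\norm{f}_{L^{p,\infty}}$ and is absorbed into the trivial $|\Omega|$ bound below that threshold. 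Taking $\sup_{t>0}t|\cdot|^{1/p}$ then gives~\eqref{eq:cdm}.

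\emph{Main obstacle.} The delicate point is Step~3: extracting a single rotation $R$ with quantitative control on $|R_\lambda-R|$ without any circular use of~\eqref{eq:cdm} itself. Compactness of $\SOn$ and the large overlap between the $u_\lambda$'s are what make this possible. The strict inequality $q>p$ is forced by the layer-cake integral in Step~2, which diverges at $q=p$; this divergence reflects the non-solid nature of $L^{p,\infty}$ and explains why a direct Marcinkiewicz-type interpolation between two copies of Theorem~\ref{thm:FJM} is not available.
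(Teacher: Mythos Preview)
The paper does not prove this statement: Theorem~\ref{thm:cdm} is quoted from~\cite{CDM} as a preliminary tool and used as a black box in the proof of Theorem~\ref{thm:useless1}. So there is no ``paper's own proof'' to compare against. Your outline is, in fact, essentially the strategy of Conti--Dolzmann--M\"uller: Lipschitz truncation at level $\lambda$, strong Friesecke--James--M\"uller rigidity applied to the truncated map at an exponent $q>p$, and a layer-cake computation to convert the $L^q$ bound into a distribution-function estimate.

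There is, however, a genuine gap in Step~3. The assertion that $\{R_\lambda\}$ is Cauchy as $\lambda\to\infty$ does not follow from Step~2. Comparing $R_\lambda$ and $R_\mu$ (say $\mu\ge\lambda$) on $E_\lambda^c$ and using Step~2 gives
\[
|R_\lambda-R_\mu|\,|E_\lambda^c|^{1/q}\le \norm{\nabla u_\lambda-R_\lambda}_{L^q}+\norm{\nabla u_\mu-R_\mu}_{L^q}\le C\,\mu^{\frac{q-p}{q}}\norm{f}_{L^{p,\infty}}^{p/q},
\]
and since $q>p$ the right-hand side \emph{diverges} as $\mu\to\infty$; no Cauchy property follows. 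The correct (and standard) fix is not to take a limit at all: set $R:=R_{\lambda_0}$ for a single fixed scale $\lambda_0\sim |\Omega|^{-1/p}\norm{f}_{L^{p,\infty}}$, chosen so that $|E_{\lambda_0}|\le|\Omega|/2$. Then for $ct\ge\lambda_0$ the same comparison gives
\[
|R_{ct}-R|\le C\,|\Omega|^{-1/q}(ct)^{\frac{q-p}{q}}\norm{f}_{L^{p,\infty}}^{p/q},
\]
which is $\le t/4$ precisely when $t\gtrsim |\Omega|^{-1/p}\norm{f}_{L^{p,\infty}}$ --- the same threshold below which the trivial bound $t^p|\Omega|\le C\norm{f}_{L^{p,\infty}}^p$ already suffices. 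With this modification your argument goes through; the ``main obstacle'' you flag is resolved by fixing a scale rather than by passing to a limit.
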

	We also recall that, as proved in~\cite{IL}, $T$ satisfies (for smooth forms $\omega$) the pointwise bound
	\begin{equation}
	 \label{eq:bound_LHO}
	 \modulus{T\omega(x)} \le C_{n, r} \int_B \frac{\modulus{\omega(y)}}{\modulus{x-y}^{n-1}} \de y.
	\end{equation}
	Indeed, for $\omega = \omega_{\alpha} \de x^{\alpha}\in \Omega^r(B)$ we have
	\[
	 T\omega (x) = \rB{\int_B \de y \phi(y) \int_0^1 t^{r-1} \scal{x-y}{e_i} \omega_{\alpha}(tx+(1-t)y)} \de x^{\alpha} \zak e_i.
	\]
	We then make the substitution $\Phi(y, t):=\rB{tx+(1-t)y, \frac{t}{1-t}} \equiv (z(t, y), s(t))$, $\Phi: B(0, 1)\times (0, 1)\to B(0,1) \times (0, \infty)$, which gives
	\[
	 \begin{split}
	 T\omega(x) 
	            &= \rB{\int_B \de z \omega_{\alpha}(z)\frac{\scal{x-z}{e_i}}{\modulus{x-z}^n} \int_0^2 s^{r-1}(1+s)^{n-r}\phi(z+s\widehat{z - x})} \de x^{\alpha}\zak e_i \equiv\\
                    &\equiv \rB{\int_B  K^i_r(z, x-z) \omega_{\alpha}(z) \de z} \de x^{\alpha} \zak e_i,
	 \end{split}
	\]
        where
        \[
		K^i_r(x, h):=\frac{\scal{h}{e_i}}{\modulus{h}^n} \int_0^2 s^{r-1} (1+s)^{n-r} \phi(x-s\widehat{h}) \de s,
        \]
	and we noticed that, since $\phi$ has compact support, the integral from $0$ to $\infty$ actually reduces to an integral over a finite interval. That is, we get~\eqref{eq:bound_LHO}. It also follows easily from~\eqref{eq:bound_LHO} that $T$ is a compact operator from $L^p(B, \Lambda^r)$ to $L^p(B, \Lambda^{r-1})$. Moreover, by density,~\eqref{eq:lho2} extends to every differential form $\omega \in W^{1, p}(B, \Lambda^r)$, and to every differential form $\omega \in L^1(B, \Lambda^r)$ whose differential is a bounded Radon measure, $\de \omega \in \mathcal{M}_b(B, \Lambda^{r+1})$.
\section{Proof of the Main Results}
	Using the homotopy operator, we get the following weak-$L^p$ geometric rigidity estimate for incompatible fields:
	\begin{theorem}
   \label{thm:useless1}
	 Let $1^* = 1^*(n):=\frac{n}{n-1}$, and let $B \subset \mathbb{R}^n$ be the unit ball of $\mathbb{R}^n$. 
	There exists a constant $C = C(n) >0 $ such that for every $A \in L^{p^{*}}(B)$ whose $\Curl(A)$ is a vector measure on $U$ with bounded total variation and whose support is contained in $B$, i.e. $\spt \Curl(A) \Subset B$, there exist a rotation $R \in SO(n)$ such that
	 \[
	  \norm{A - R}_{L^{1^*, \infty}(B)} \le C\rB{\norm{\dist(A, SO(n))}_{L^{1^*, \infty}(B)} + \modulus{\Curl(A)}(B)}.
	 \]
	\end{theorem}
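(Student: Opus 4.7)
The strategy is to split $A$ into a gradient plus a remainder determined entirely by $\Curl A$, estimate the remainder in weak-$L^{1^*}$, and then apply Theorem~\ref{thm:cdm} to the gradient part. Identifying $A$ row-wise with the $n$-tuple of $1$-forms $\omega^i := A^i_j\,\de x^j$, so that $\Curl A$ corresponds to $\de\omega$, the identity~\eqref{eq:lho2}---which by the final remark of Section~2 already extends to forms whose differential is a bounded Radon measure---gives the decomposition
\[
 A = F + \nabla u, \qquad F^i := T\de\omega^i, \quad u^i := T\omega^i,
\]
in which $F$ depends only on $\Curl A$ while $\nabla u = A - F$ is compatible.

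The first step is to bound $F$ in $L^{1^*,\infty}(B)$ purely in terms of $|\Curl A|(B)$. The integral representation underlying~\eqref{eq:bound_LHO} makes sense for Radon-measure data and yields
\[
 |F(x)| \le C_n \int_B \frac{\de|\Curl A|(y)}{|x-y|^{n-1}} = C_n\, I_1\rB{|\Curl A|}(x),
\]
where $I_1$ is the Riesz potential of order one. Since $I_1$ is of weak type $(1,1^*)$ on bounded Radon measures---a direct scaling computation on the level sets of $|{\cdot}|^{-(n-1)}$---one obtains
\[
 \norm{F}_{L^{1^*,\infty}(B)} \le C_n\, |\Curl A|(B).
\]

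Next, apply Theorem~\ref{thm:cdm} to $u$. Since $\dist(\nabla u, SO(n)) \le \dist(A, SO(n)) + |F|$ pointwise and $\norm{\cdot}_{L^{1^*,\infty}}$ is quasi-subadditive,
\[
 \norm{\dist(\nabla u, SO(n))}_{L^{1^*,\infty}(B)} \le C\rB{\norm{\dist(A, SO(n))}_{L^{1^*,\infty}(B)} + |\Curl A|(B)}.
\]
Theorem~\ref{thm:cdm} then provides $R \in SO(n)$ such that $\norm{\nabla u - R}_{L^{1^*,\infty}(B)}$ is bounded by the same right-hand side. Writing $A - R = (\nabla u - R) + F$ and applying once more the quasi-triangle inequality concludes the argument.

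The proof is essentially routine given the preliminaries; the only point requiring a little care is the extension of the pointwise kernel bound~\eqref{eq:bound_LHO} to measure-valued differentials, which is immediate from the explicit formula since the kernel $K^i_r(z, x-z)$ is dominated by $C|x-z|^{-(n-1)}$ uniformly in $z$. The compact-support hypothesis $\spt\Curl A \Subset B$ is what permits the Riesz-potential bound to be formulated directly in terms of $|\Curl A|(B)$ and removes any concern about boundary contributions when justifying the measure extension by mollification.
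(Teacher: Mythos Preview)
Your proof is correct and follows essentially the same route as the paper: decompose $A$ via the averaged homotopy operator into a gradient part $\de T\omega$ plus the remainder $T\de\omega$, bound the latter in $L^{1^*,\infty}$ by $|\Curl A|(B)$, apply Theorem~\ref{thm:cdm} to the gradient part, and finish with the quasi-triangle inequality. The only cosmetic difference is that the paper derives the weak-$L^{1^*}$ bound on $T\de A$ from scratch via a Hardy--Littlewood rearrangement argument on level sets, whereas you invoke the equivalent standard weak-type $(1,1^*)$ estimate for the Riesz potential $I_1$.
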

	\begin{proof}
	 Take any measurable subset $E \subset B$, and let $r > 0$ be such that $\modulus{B(0, r)} = \modulus{E}$. Then, using~\eqref{eq:bound_LHO} and the Hardy-Littlewood inequality
	 \[
	  \begin{split}
	   \int_E \de x \modulus{(T\omega)(x)} & \le C \int_E \de x \int_B \de y \frac{\modulus{\omega(y)}}{\modulus{x - y}^{n-1}} = \\
	  &=C \int_B \de y \modulus{\omega(y)} \int_E \frac{\de x}{\modulus{x - y}^{n-1}} \le\\
	   &\le C \int_B \de y \modulus{\omega(y)} \int_{\mathbb{R}^n} \rchi_{E - x}(y)\frac{\de y}{ \modulus{y}^{n-1}} \le\\
	   &\le C \int_B \de y \modulus{\omega(y)} \int_{\mathbb{R}^n} \rchi_{B(0, r)} \frac{\de y}{\modulus{y}^{n-1}} \le \\
	   &= C \int_B \de y \modulus{\omega(y)} \int_0^r \de t \int_{\partial B(0, t)} \frac{\de y}{t^{n-1}} = \\
	   &= C r \norm{\omega}_{L^1(B)} = C \modulus{E}^{\frac{1}{n}} \norm{\omega}_{L^1(B)}.
	  \end{split}
	 \]
	 This gives immediately
	 \[
	  \norm{T\omega}_{L^1(B)} \le C_n \norm{\omega}_{L^1(B)},
	 \]
	 and thus, using~\eqref{eq:lho2}, $\norm{A - T\de A}_{L^1(B)} \le C \norm{\de A}_{L^1(B)}$, which extends immediately by density in the case when $\de A$ is a vector measure with bounded total variation. Choosing $E = \cB{x \in B \biggr| \modulus{T\omega(x)} > t}$, for $t > 0$
	 \[
	  t \modulus{E} \le \int_E \modulus{T\omega(x)} \de x \le C \modulus{E}^{\frac{1}{n}} \modulus{\de A}(B).
	 \]
	 Passing to the supremum over $t > 0$, we find
            \begin{equation}
                \label{eq:weak_Lp_lho}
	  \norm{T\de A}_{L^{1^*, \infty}(B)} \le C_n \modulus{\de A}(B).
            \end{equation}
	 Since $B$ is convex and $d(A - T\de A) = \de^2 TA = 0$, we can find a function $g$ such that $\de g = A - T\de A$. From the estimates proven, is possible to apply Theorem~\ref{thm:cdm} to $g$ and find
	 \[
	  \norm{\de g - R}_{L^{1^*, \infty}(B)} \le C \norm{\dist(\de g, SO(n))}_{L^{1^*, \infty}(B)}.
	 \]
	 But
	 \[
	  \norm{\de g - R}_{L^{1^*, \infty}(B)} \ge C \norm{A - R}_{L^{1^*, \infty}(B)} - \norm{T \de A}_{L^{1^*, \infty}(B)}
	 \]
	 and
	 \[
	  \norm{\dist(\de g, SO(n))}_{L^{1^*, \infty}(B)} \le \norm{\dist(A, SO(n))}_{L^{1^*, \infty}(B)} + \norm{T\de A}_{L^{1^*, \infty}(B)}.
	 \]
	 In particular,
	 \[
	  \norm{A - R}_{L^{1^*, \infty}(B)} \le C\rB{\norm{\dist(A, SO(n))}_{L^{1^*, \infty}(B)} + \modulus{\Curl(A)}(B)}.\qedhere
	 \]
\end{proof}

We now give another estimate for $L^p$ norms. It requires an $L^{\infty}$-bound on the matrix field $A$, which is natural in the context of the theory of elasticity.
\begin{theorem}
    \label{thm:rig_LL_1}
    Let $n \ge 3$, $1^*:=1^*(n):=\frac{n}{n-1}$, $p \in [1^*, 2]$ and fix $M > 0$. There exists a constant $C = C(n, M, p) > 0$, depending only on the dimension $n$, the exponent $p$ and the constant $M$, such that for every $A \in L^{\infty}(B)$, with $\norm{A}_{\infty} \le M$ and $\Curl(A) \in \mathcal{M}_b(B, \Lambda^2)$, $B:=B(0, 1)$, there exists a corresponding rotation $R \in \SOn$ for which, if $p > 1^*$
    \begin{equation}
	    \label{eq:rig_LL_p}
            \int_B \modulus{A - R}^p \de x \le C\rB{\int_B\dist^{p}(A, \SOn)\de x + \modulus{\Curl(A)}^{1^*}(B)},
    \end{equation}
   while, if $p = 1^*$,
    \begin{equation}
        \label{eq:rig_LL}
        \begin{split}
            \int_B \modulus{A - R}^{1^*} \de x \le& C\int_B\dist^{1^*}(A, \SOn)\de x + \\
                                                  & + C\modulus{\Curl(A)}^{1^*}(B)\cB{\modulus{\log\rB{\modulus{\Curl(A)}(B)}}+1}.
        \end{split}
    \end{equation}
\end{theorem}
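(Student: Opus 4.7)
The strategy follows the sketch in the introduction. Setting $g := TA$ so that $A = T\de A + \de g$ by~\eqref{eq:lho2}, I apply Theorem~\ref{thm:FJM} to $g$ at exponent $p$ and obtain a rotation $R \in \SOn$ with
\[
\norm{\de g - R}_{L^p(B)} \leq C\,\norm{\dist(\de g, \SOn)}_{L^p(B)}.
\]
The triangle inequality applied with the rotation realizing $\dist(A, \SOn)$ gives $\dist(\de g, \SOn) \leq \modulus{T\de A} + \dist(A, \SOn)$, and $\modulus{A - R} \leq \modulus{T\de A} + \modulus{\de g - R}$. So the main point is to estimate $\int_B\modulus{T\de A}^p\de x$ by a combination of $\mu^{1^*}$ (where $\mu := \modulus{\Curl(A)}(B)$) and $\int_B \dist^p(A, \SOn)\de x$.

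The analytic input for the remainder estimate is the weak-$L^{1^*}$ bound~\eqref{eq:weak_Lp_lho}, $\norm{T\de A}_{L^{1^*,\infty}(B)} \leq C_n\mu$, already established in the proof of Theorem~\ref{thm:useless1}. By itself this controls $T\de A$ at the critical exponent, with the logarithmic loss that will produce~\eqref{eq:rig_LL}. To reach the supercritical range $p > 1^*$ I would exploit the $L^\infty$-bound on $A$: since $T\de A = A - \de g$ and $\modulus{\de g} \leq \sqrt n + \modulus{\de g - R}$, one has the pointwise estimate
\[
\modulus{T\de A} \leq M + \sqrt n + \modulus{\de g - R},
\]
which shows that on any super-level set $\{\modulus{T\de A} > K(M + \sqrt n)\}$ with $K$ large, the quantity $\modulus{\de g - R}$ is comparable to $\modulus{T\de A}$, up to a factor $K/(K-1)$.

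I would then compute $\int_B\modulus{T\de A}^p\de x$ by the layer-cake formula, splitting at the threshold $t_0 := K(M + \sqrt n)$. On the low range $\{\modulus{T\de A} \leq t_0\}$, the bound $\modulus{\{\modulus{T\de A} > t\}} \leq \min(\modulus{B}, C\mu^{1^*}/t^{1^*})$ coming from~\eqref{eq:weak_Lp_lho} integrates (for $p > 1^*$) to a quantity of order $C_{p,M,K}\mu^{1^*}$, with no logarithmic factor because the upper integration limit is finite; for $p = 1^*$ the same step produces a factor $\log(t_0/t^*)$ of order $\modulus{\log\mu}$, which is the source of the logarithmic correction in~\eqref{eq:rig_LL}. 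On the high range $\{\modulus{T\de A} > t_0\}$, the pointwise comparison above yields a contribution bounded by a small multiple of $\int_B \modulus{\de g - R}^p \de x$, which Theorem~\ref{thm:FJM} then controls by $\int_B \dist^p(A, \SOn)\de x$ plus a piece reintroducing $\int_B \modulus{T\de A}^p\de x$. The main obstacle is to close this coupled system by absorption, i.e. to choose $K$ so that the product of the high-range factor $(K/(K-1))^{p-1}$ and the FJM constant is small enough for the $\int_B\modulus{T\de A}^p\de x$ term to be absorbed on the left-hand side; this is the technical heart of the proof, and is what allows the sharp estimates~\eqref{eq:rig_LL_p} and~\eqref{eq:rig_LL} to be obtained.
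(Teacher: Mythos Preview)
Your reduction to estimating $\int_B |T\de A|^p$ is correct, and the low-range computation via the weak-$L^{1^*}$ bound~\eqref{eq:weak_Lp_lho} does produce the right contributions, including the logarithm when $p=1^*$. The gap is in the high-range absorption. On $\{|T\de A|>K(M+\sqrt n)\}$ the pointwise comparison gives $|T\de A|\le\frac{K}{K-1}|\de g-R|$, so the high-range integral is bounded by $\bigl(\tfrac{K}{K-1}\bigr)^{p}\int_B|\de g-R|^p$ (or with exponent $p-1$ via the layer-cake, as you write). Either way this prefactor tends to~$1$ as $K\to\infty$; it is never small. After applying Theorem~\ref{thm:FJM} and the triangle inequality $\dist(\de g,\SOn)\le\dist(A,\SOn)+|T\de A|$, the coefficient in front of the reintroduced $\int_B|T\de A|^p$ is bounded below by the FJM constant, and that constant is always $\ge 1$ (test with an affine map $u(x)=Ax$). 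So the coupled inequality has net constant $\ge 1$ on the term you want to absorb, and no choice of~$K$ closes it.

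What the paper uses instead is an oscillation bound rather than a pointwise one. From $T\de A=A-\de TA$ and the fact that $\nabla T$ splits as a bounded operator plus a Calder\'on--Zygmund operator, one obtains $\norm{T\de A}_{\mathrm{BMO}}\le C_n\norm{A}_{\infty}\le C_nM$. A Calder\'on--Zygmund decomposition of $|T\de A|^p$ at height $\Lambda^p$ localises $\{|T\de A|>\Lambda\}$ into cubes $Q_j$ on which the average of $T\de A$ has modulus $\le\Lambda$; the John--Nirenberg inequality then gives exponential tail decay $\modulus{\{x\in Q_j:|T\de A-(T\de A)_{Q_j}|>\lambda-\Lambda\}}\le C_1|Q_j|\,e^{-C_2(\lambda-\Lambda)/M}$, and integrating in~$\lambda$ yields a bound of the form $C_{n,M}\,\frac{1+\Lambda}{\Lambda^{p}}\int_B|T\de A|^p$ for the tail contribution. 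This factor \emph{does} tend to~$0$ as $\Lambda\to\infty$, which is what makes the absorption possible. The BMO/John--Nirenberg step is the missing ingredient in your argument.
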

\begin{remark}
	\label{rmk:everything_pointless}
    The constant $C$ in~\eqref{eq:rig_LL} is \emph{not} scaling invariant in the critical regime $p = 1^*$.
\end{remark}
\begin{proof}[Proof of Theorem~\ref{thm:rig_LL_1}]
	Without loss of generality, we can assume $T\de A$ not identically constant. Indeed, if $T\de A$ is identically constant, from the identity $T\de A = A - \de T A$, we see that $\de A = 0$, hence the result follows applying Theorem~\ref{thm:FJM}. As in the proof of Theorem~\ref{thm:useless1}, applying Theorem~\ref{thm:FJM} (and using $\modulus{a - b}^p \ge 2^{1-p}\modulus{a}^p - \modulus{b}^p$) we find a rotation $R \in \SOn$ for which the inequality
    \begin{equation}
        \label{eq:useless11}
        \int_B \modulus{A - R}^p \de x \le C_n\rB{\int_B\modulus{\dist(A, \SOn)}^p\de x + \int_B \modulus{T\de A(x)}^p \de x}
    \end{equation}
    holds. We then just need to estimate the last term in the right hand side of~\eqref{eq:useless11}. For, fix a $\Lambda > 1$ (to be chosen later), and define the integrals
    \[
        I:=\int_{\modulus{T\de A } > \Lambda} \modulus{T\de A}^p \de x,\qquad II:=\int_{\modulus{T\de A}\le \Lambda} \modulus{T \de A}^p \de x.
    \]
    We now give an estimate for $I$. Firstly, we recall that $T$ is a bounded operator from $L^p(B, \Lambda^r)$ into $W^{1, p}(B, \Lambda^{r+1})$, whenever $p\in (1, \infty)$ (cf. \cite[Proposition 4.1]{IL}). Moreover, $T\de A = A - \de TA $, and $\nabla T = S_1 + S_2$, where $S_1$ is a ``weakly'' singular operator which maps continuously $L^{\infty}$ into itself, while $S_2$ is a Calder\'on-Zygmund operator (cf. \cite[Proposition 4.1]{IL}). In particular,
    \[
        \norm{T \de A}_{\text{BMO}}\le C_n \norm{A}_{\infty} \le C_n M,
    \]
where $C_n > 0$ is a constant depending only on the dimension. Now, we can write
    \begin{equation}
        \label{eq:useless777}
	I = \Lambda^{p - \pst}\Lambda^{\pst} \modulus{\cB{\modulus{T \de A} > \Lambda}} + I',\qquad I':=\int_{\Lambda}^{\infty} \lambda^{p - 1}\modulus{\cB{\modulus{T\de A} > \lambda}} \de \lambda.
    \end{equation}
    Clearly,
    \[
        \Lambda^{\pst} \modulus{\cB{\modulus{T \de A} > \Lambda}} \le \norm{T\de A}_{L^{\pst, \infty}}^{\pst} \le C \modulus{\de A}(B)^{\pst}.
    \]
    We now take a Calder\'on-Zygmund decomposition of $F(x):=\modulus{T\de A(x)}^p$: namely, we find a function $g \in L^{\infty}$, with $\norm{g}_{\infty} \le 2^{-n}\Lambda^p$ and disjoint cubes $\cB{Q_j}_{j \ge 1}$ such that, if $b:=\sum_{j \ge 1} \rchi_{Q_j} F$,
    \[
        \begin{cases}
            F = g + b,\\
            2^{-n} \Lambda^p < \fint_{Q_j} F \de x\le \Lambda^p \quad \rB{\text{Jensen }\Rightarrow \modulus{\fint_{Q_j} T\de A(x) \de x} \le \Lambda},\\
            \modulus{\bigcup_{j \ge 1} Q_j} < \frac{2^n}{\Lambda^p} \int \modulus{T \de A}^p \de x.
        \end{cases}
    \]
With such a decomposition, outside the cubes $Q_j$, $\modulus{T \de A}^p = \modulus{g(x)} \le 2^{-n}\Lambda^p \le \Lambda^p$. Hence, using the John-Nirenberg inequality and the elementary estimate
    \[
        \int_x^{\infty} \lambda^q e^{-\lambda} \de \lambda \le e^{-x}(1+x),\quad \forall q \le 1 \text{ and } x \ge 1,
    \]
    we find that (provided $p \le 2$)
    \begin{equation}
        \label{eq:useless888}
        \begin{split}
            I' &= \int_{\Lambda}^{\infty} \lambda^{p - 1} \sum_{j \ge 1} \modulus{\cB{x \in Q_j \biggr| \modulus{T\de A} > \lambda}} \de \lambda \le \\
            &\le \int_{\Lambda}^{\infty} \lambda^{p - 1} \sum_{j \ge 1} \modulus{\cB{x \in Q_j\biggr| \modulus{T\de A(x) - \fint_{Q_j} T\de A \de x} > \lambda - \Lambda}} \de \lambda \le \\
            &\le C_1 \int_{\Lambda}^{\infty} \lambda^{p - 1} \rB{\sum_{j \ge 1} \modulus{Q_j}} \exp\rB{-C_2 \frac{\lambda - \Lambda}{\norm{T\de A}_{\text{BMO}}}} \de \lambda < \\
            &< C_1 \frac{2^n}{\Lambda^p} \rB{\int \modulus{T\de A}^p} e^{C_2 \frac{\Lambda}{\norm{T\de A}_{\text{BMO}}}} \rB{\frac{\norm{T\de A}_{\text{BMO}}}{C_2}}^{p} \int_{\frac{C_2}{\norm{T\de A}_{\text{BMO}}} \Lambda}^{\infty}\lambda^{p - 1} e^{-\lambda} \de \lambda \le \\
            &\le C_1 \frac{2^n}{\Lambda^p} \rB{\int \modulus{T\de A}^p} \rB{\frac{\norm{T\de A}_{\text{BMO}}}{C_2}}^p \rB{1 + \frac{C_2}{\norm{T\de A}_{\text{BMO}}}\Lambda} \le \\
            &\le C_{n, M} \rB{\int \modulus{T \de A}^p} \frac{1 + \Lambda}{\Lambda^p}.
        \end{split}
    \end{equation}
    Hence, if we choose $\Lambda$ big enough (depending only on $n$ and $M$) in~\eqref{eq:useless888}, 
    \begin{equation}
        \label{eq:useless999}
        I' \le \frac{1}{2} \int \modulus{T\de A}^p.
    \end{equation}
    Let us now estimate $II$. If $p > \pst$, we can write
    \[
	    \begin{split}
		    \int_{\modulus{T\de A} \le \Lambda} \modulus{T\de A}^p \de x &= \int_{1 < \modulus{T\de A}\le \Lambda} \modulus{T\de A}^p \de x + \sum_{j \ge 0} \int_{2^{-j-1} < \modulus{T\de A} \le 2^{-j}} \le \\
		    &\le C \cB{\Lambda^p \modulus{\de A}^{\pst}(B) + \sum_{j \ge 0} 2^{-(j+1)p} \modulus{\cB{\modulus{T\de A} > 2^{-(j+1)}}} } \le \\
		    &\le C\modulus{\de A}^{\pst}(B)\rB{\Lambda^p + \sum_{j \ge 0} 2^{-j(\pst - p)}} \le \\
		    &\le C(n, p, M) \modulus{\de A}^{\pst}(B),
	    \end{split}
    \]
    which gives~\eqref{eq:rig_LL_p}. In the case $p = \pst$, we are going to make use of the increasing convex function $\Psi$, defined as the linear (convex) continuation of $t \mapsto t^p$ for $t \ge \Lambda$:
    \[
        \Psi(t):=\begin{cases}
            t^{\pst} & \text{if } t \le \Lambda,\\
                    \pst \Lambda^{\pst - 1} t + (1-\pst) \Lambda^{\pst}& \text{if }t \ge \Lambda.
                 \end{cases}
    \]
    \begin{equation}
        \label{eq:uselessB}
        \begin{split}
            II &\le \int_B \Psi(\modulus{T\de A(x)}) \de x \le \int_B \Psi\rB{\fint_B \frac{C\modulus{\de A}(B) \de \modulus{\de A}(y)}{\modulus{x-y}^{n-1}}} \le\\
            &\le \int_B \fint \Psi\rB{\frac{C\modulus{\de A}(B)}{\modulus{x-y}^{n-1}}} \de \modulus{\de A}(y)  \de x = \\
            &= \fint_B \de \modulus{\de A}(y) \int_B  \Psi\rB{\frac{C\modulus{\de A}(B)}{\modulus{x-y}^{n-1}}} \de x \le \\
            &\le \int_{B(0, 2)} \Psi\rB{\frac{C\modulus{\de A}(B)}{\modulus{z}^{n-1}}} \de z = C\int_0^2 \de \rho \rho^{n-1} \Psi\rB{\frac{C\modulus{\de A}(B)}{\rho^{n-1}}} = \\
            &= \int_0^{C\rB{\modulus{\de A}(B)\Lambda^{-1}}^{\frac{1}{n-1}}} \rho^{n-1}\rB{1^*\Lambda^{\pst - 1} \frac{C\modulus{\de A}(B)}{\rho^{n-1}} + (1 - 1^*)\Lambda^{\pst}}\de \rho + \\
            &\quad+ C \int_{C\rB{\modulus{\de A}(B)\Lambda^{-1}}^{\frac{1}{n-1}}}^2 \frac{\modulus{\de A}(B)^{1^*}}{\rho} \de \rho \le \\
            &\le C \modulus{\de A}(B)^{1^*}\rB{1 + \modulus{\log\rB{\modulus{\de A}(B)}}}.
        \end{split}
    \end{equation}
    Combining together~\eqref{eq:useless777}, ~\eqref{eq:useless999} and~\eqref{eq:uselessB}, we obtain~\eqref{eq:rig_LL}.
\end{proof}

\begin{remark}
	The same conclusions can be obtained considering the operator defined by an average on the sphere:
	\[
		\tilde{T}\omega(x):=\int_{\mathbb{S}^{n-1}} \de \mathcal{H}^{n-1}(y) k_y \omega(x).
	\]
\end{remark}
\begin{remark}
  Using Korn's inequality instead of Theorem~\ref{thm:FJM}, one can easily prove the linear counterpart of Theorem~\ref{thm:rig_LL_1}.
\end{remark}

%

  \begin{proposition}
    \label{prop:curl_bounds_D_SOn}
     Let $\Omega \subset \mathbb{R}^n$ be a bounded open set, and suppose $A \in L^2(\Omega)$ and $\spt(A) \Subset \Omega$. Consider a tessellation of $\mathbb{R}^n$ with cubes $\cB{Q^{(\rho)}_i}_{i \ge 1} \equiv \cB{Q(x_i, \rho)}$ of side $\rho$, and define $A_{\rho}$ as the piecewise constant function
     \begin{equation}
      \label{eq:def_A_rho}
      A_{\rho}:=\sum_{i \ge 1} R^{(\rho)}_i \rchi_{Q_{\rho,i}},
     \end{equation}
     where the rotations $R^{(\rho)}_i$ are the ones given by Theorem~\ref{thm:useless1} applied to $A$ on the balls $B(x_i, \frac{3}{2}\rho)$. There exists a constant $C = C(n) > 0$, depending only on the dimension $n$, such that
     \begin{equation}
      \label{eq:prop1_1}
      \frac{1}{\rho}\norm{A - A_{\rho}}_{L^1(\Omega)} + \modulus{DA_{\rho}}(\Omega) \le C\rB{\rho^{\frac{n-2}{2}}\norm{\dist(A, SO(n))}_{L^2(\Omega)} + \modulus{\Curl(A)}(\Omega)}.
     \end{equation}
     In particular, if $A \in SO(n)$ almost everywhere,
     \begin{equation}
      \label{eq:prop1_2}
      \modulus{DA}(\Omega) \le C \modulus{\Curl(A)}(\Omega).
     \end{equation}
     That is, $A \in BV(\Omega, SO(n))$ provided $\modulus{\Curl(A)}(\Omega)$ is finite.
  \end{proposition}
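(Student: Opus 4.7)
The plan is to apply Theorem~\ref{thm:useless1} on each ball $B_i := B(x_i, \frac{3}{2}\rho)$ and to patch the resulting local weak-$L^{1^*}$ estimates into global bounds on $A - A_\rho$ and on the jumps of $A_\rho$ across cube faces. By translation and by the scale invariance of the weak-$L^{1^*}$ estimate (both sides of Theorem~\ref{thm:useless1} scale as $\rho^{n-1}$ under $y = x/\rho$), the theorem applied on $B_i$ yields the rotations $R_i^{(\rho)}$ of the statement together with
\[
    \|A - R_i^{(\rho)}\|_{L^{1^*,\infty}(B_i)} \le C\bigl(\|\dist(A,\SOn)\|_{L^{1^*,\infty}(B_i)} + |\Curl A|(B_i)\bigr),
\]
with $C = C(n)$; the hypothesis $\spt\Curl A \Subset B_i$ for balls meeting $\partial\Omega$ is obtained from the zero-extension of $A$ past $\spt A \Subset \Omega$.

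Next I would convert this weak estimate on $B_i$ into an $L^1$-estimate on the smaller cube $Q_i$. The Kolmogorov inequality $\int_{Q_i}|f| \le C|Q_i|^{1/n}\|f\|_{L^{1^*,\infty}(Q_i)}$ (one line of layer-cake) contributes the factor $\rho = |Q_i|^{1/n}$, while on the right-hand side Hölder on the finite-measure ball $B_i$ converts $\|\dist\|_{L^{1^*,\infty}(B_i)} \le \|\dist\|_{L^{1^*}(B_i)} \le |B_i|^{1/1^*-1/2}\|\dist\|_{L^2(B_i)}$, contributing the factor $\rho^{(n-2)/2}$. Combining these gives
\[
    \|A - R_i^{(\rho)}\|_{L^1(Q_i)} \le C\rho^{n/2}\|\dist(A,\SOn)\|_{L^2(B_i)} + C\rho\,|\Curl A|(B_i).
\]
Summation over $i$ uses the finite-overlap property $\#\{j : B_j \ni x\} \le C_n$: the $\Curl$ sum is $\le C|\Curl A|(\Omega)$ at once, and the distance sum is handled by combining the overlap identity $\sum_i \|\dist\|_{L^2(B_i)}^2 \le C_n\|\dist\|_{L^2(\Omega)}^2$ with Cauchy--Schwarz; dividing by $\rho$ yields the $\frac{1}{\rho}\|A-A_\rho\|_{L^1(\Omega)}$-part of~\eqref{eq:prop1_1}.

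For the BV bound I use that $A_\rho$ is piecewise constant on the cubes, so $|DA_\rho|(\Omega) = \sum_F |R_i^{(\rho)} - R_j^{(\rho)}|\,\mathcal{H}^{n-1}(F)$, the sum running over the $(n-1)$-faces $F = Q_i \cap Q_j$ of adjacent cubes. On the overlap $B_{ij} := B_i \cap B_j$, which has measure $\gtrsim \rho^n$, the \emph{constant} matrix $R_i^{(\rho)} - R_j^{(\rho)}$ has weak-$L^{1^*}$ norm exactly $|R_i^{(\rho)} - R_j^{(\rho)}|\,|B_{ij}|^{1/1^*} \asymp |R_i^{(\rho)} - R_j^{(\rho)}|\rho^{n-1}$, so by the quasi-triangle inequality for $\|\cdot\|_{L^{1^*,\infty}}$,
\[
    |R_i^{(\rho)} - R_j^{(\rho)}|\rho^{n-1} \le C\bigl(\|A - R_i^{(\rho)}\|_{L^{1^*,\infty}(B_i)} + \|A - R_j^{(\rho)}\|_{L^{1^*,\infty}(B_j)}\bigr).
\]
Summing over $F$ and invoking the same overlap/Cauchy--Schwarz mechanism as above produces the $|DA_\rho|(\Omega)$-part of~\eqref{eq:prop1_1}.

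Finally, to deduce~\eqref{eq:prop1_2} from~\eqref{eq:prop1_1} I specialize to $A \in \SOn$ a.e., so $\dist(A,\SOn)\equiv 0$ and~\eqref{eq:prop1_1} collapses to $\frac{1}{\rho}\|A-A_\rho\|_{L^1(\Omega)}+|DA_\rho|(\Omega) \le C|\Curl A|(\Omega)$ uniformly in $\rho$. The first summand forces $A_\rho \to A$ in $L^1(\Omega)$ as $\rho \to 0^+$, and lower semicontinuity of the total variation yields $|DA|(\Omega) \le \liminf_{\rho \to 0^+}|DA_\rho|(\Omega) \le C|\Curl A|(\Omega)$. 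The step I expect to be the main obstacle is the summation in the second paragraph: a naive Cauchy--Schwarz applied to $\sum_i \|\dist\|_{L^2(B_i)}$ inserts a factor $\sqrt{N}$ with $N \sim |\Omega|/\rho^n$, so one must exploit the bounded-overlap identity at the level of the \emph{squared} $L^2$-norms (and not of the norms themselves) in order to preserve the scale-invariant factor $\rho^{(n-2)/2}$ on the right of~\eqref{eq:prop1_1}.
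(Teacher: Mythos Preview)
Your outline follows the paper's proof almost step for step: apply the scale--invariant weak--$L^{1^*}$ rigidity on each ball $B_i$, pass from $L^{1^*,\infty}$ to $L^1$ on the cube $Q_i$ via the Kolmogorov inequality, convert $\norm{\dist}_{L^{1^*,\infty}(B_i)}$ to $\norm{\dist}_{L^2(B_i)}$ by H\"older (producing the factor $\rho^{(n-2)/2}$), compare rotations on neighbouring cubes, sum with bounded overlap, and finally obtain~\eqref{eq:prop1_2} from $A_\rho\to A$ in $L^1$ together with lower semicontinuity of the total variation. The only tactical difference is in the rotation comparison: the paper introduces an auxiliary rotation $R'_{\rho,i}$ obtained by applying Theorem~\ref{thm:useless1} on the enlarged cube $2Q_i^{(\rho)}\supset Q_i^{(\rho)}\cup Q_j^{(\rho)}$ and triangulates $\modulus{R_i-R_j}$ through $R'_{\rho,i}$, whereas you compare $R_i$ and $R_j$ directly on the overlap $B_i\cap B_j$ (of measure $\gtrsim\rho^n$). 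Your route is a little cleaner since it avoids a second invocation of the rigidity theorem, but the two are equivalent in substance.

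Your closing caveat about the summation is justified, and the paper glosses over precisely the same point: after reaching $\modulus{R_i-R_j}\rho^{n-1}\le C\rho^{(n-2)/2}\norm{\dist(A,\SOn)}_{L^2(4Q_i)}+\dots$ it simply writes ``since the cubes $4Q_i^{(\rho)}$ overlap only finitely many times'' and passes to $\norm{\dist(A,\SOn)}_{L^2(\Omega)}$. Bounded overlap controls $\sum_i\norm{\dist}_{L^2(B_i)}^2$ but not $\sum_i\norm{\dist}_{L^2(B_i)}$; the gap between the two is a factor $\sqrt{N}\sim\rho^{-n/2}$. Your proposed cure---exploit overlap at the level of squared $L^2$ norms---does not close this gap, because the local estimate you derived is \emph{linear} in $\norm{\dist}_{L^2(B_i)}$, so there is no stage at which squaring and summing is available without a Cauchy--Schwarz loss. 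Neither argument therefore establishes the distance term in~\eqref{eq:prop1_1} as written; the headline conclusion~\eqref{eq:prop1_2} is unaffected, since there $\dist(A,\SOn)\equiv 0$ and the offending sum is absent.
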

  \begin{proof}
   By definition, the rotations $R^{(\rho)}_i$ in~\eqref{eq:def_A_rho} satisfy
   \[
    \norm{A - R^{(\rho)}_i}_{L^{1^*, \infty}(Q^{(\rho)}_i)} \le C_n\rB{\norm{\dist(A, SO(n))}_{L^{1^*, \infty}} + \modulus{\Curl(A)}(2Q^{(\rho)}_i)}.
   \]
   Let $\phi \in \mathcal{C}^1_c(\Omega)$. Then
   \[
    \modulus{\int A_{\rho}\text{div}(\phi) \de x} \le \sum_{\substack{i, j \text{ s.t. }\\ \partial Q^{(\rho)}_i \cap \partial Q^{(\rho)}_j \ne \emptyset}} \rho^{n-1} \modulus{R^{(\rho)}_i - R^{(\rho)}_j}.
   \]
   Now, for any two adjacent cubes $Q^{(\rho)}_i$ and $Q^{(\rho)}_j$, take the rotation $R'_{\rho, i}$ given applying Theorem~\ref{thm:useless1} to the cube $2Q^{(\rho)}_i$. Then
   \[
    \begin{split}
      \modulus{R^{(\rho)}_i - R^{(\rho)}_j} \rho^{n - 1} &\le \rB{\modulus{R^{(\rho)}_i - R'_{\rho, i}} + \modulus{R'_{\rho, i} - R^{(\rho)}_j}} \rho^{n-1} \le \\
                                                       &\le C_n \rB{\norm{R^{(\rho)}_i - R'_{\rho, i}}_{L^{\pst, \infty}(Q^{(\rho)}_i)} + \norm{R'_{\rho, i} - R^{(\rho)}_j}_{L^{\pst, \infty}(Q^{(\rho)}_j)}} \le \\
                                                        &\le C_n \rB{\norm{A - R^{(\rho)}_i}_{L^{\pst, \infty}(Q^{(\rho)}_i)} + \norm{A - R^{(\rho)}_j}_{L^{\pst, \infty}(Q^{(\rho)}_j)} + \norm{A - R'_{\rho, i}}_{L^{\pst, \infty}(2Q^{(\rho)}_i)}}  \le \\
                                                        &\le C_n\rB{\norm{\dist(A, SO(n))}_{L^{1^*, \infty}(4Q^{\rho}_i)} +\modulus{\Curl(A)}(4Q^{(\rho)}_i)} \\
                                                        &\le C_n\rB{\rho^{\frac{n-2}{2}}\norm{\dist(A, SO(n))}_{L^2(4Q^{\rho}_i)} +\modulus{\Curl(A)}(4Q^{(\rho)}_i)}.
    \end{split}
   \]
   Taking the supremum over $\phi$, since the cubes $4Q^{(\rho)}_i$ overlap only finitely many times, we obtain
   \[
    \modulus{DA_{\rho}}(\Omega) \le C_n\rB{\rho^{\frac{n-2}{2}}\norm{\dist(A, SO(n))}_{\Omega}+ \modulus{\Curl(A)}(\Omega)}.
   \]
   Moreover, from the definition of weak-$L^1$:
   \[
    \frac{1}{\rho} \int_{Q^{(\rho)}_i} \modulus{A - A_{\rho}} \de x \le C_n \norm{A - A_{\rho}}_{L^{\pst, \infty}(Q^{(\rho)}_i)} \le C_n\rB{\norm{\dist(A, SO(n))}_{L^{1^*, \infty}(4Q^{\rho}_i)} + \modulus{\Curl(A)}(4Q^{(\rho)}_i)}.
   \]
   This gives in particular~\eqref{eq:prop1_1}. Moreover
   \[
    \begin{split}
	    \norm{A - A_{\rho}}_{L^1(\Omega)} &\le \sum_{i\ge 1} \norm{A - A_{\rho}}_{L^1(Q^{(\rho)}_i)} \le C_n \rho \sum_{i \ge 1}\rB{\norm{A - A_{\rho}}_{L^{\pst, \infty}(2Q^{(\rho)}_i)} + \modulus{\Curl(A)}(2Q^{(\rho)}_i) }\le\\
	    &\le C_n \rho \rB{\norm{A - A_{\rho}}_{L^{\pst, \infty}(\Omega)} + \modulus{\Curl(A)}(\Omega)} \xrightarrow[\rho \to 0]{} 0.
    \end{split}
   \]
   That is, $A_{\rho} \to A$ strongly in $L^1$. Thus, if we let $\rho\to 0$, we obtain~\eqref{eq:prop1_2} provided $A \in SO(n)$ almost everywhere.
  \end{proof}

\end{document}